\crefname{appsec}{Appendix}{Appendices}
\theoremstyle{plain}
\newtheorem{theorem}{Theorem}[section]
\newtheorem{proposition}[theorem]{Proposition}
\newtheorem{lemma}[theorem]{Lemma}
\newtheorem*{question*}{Question} \Crefname{question}{Question}{Questions}
\theoremstyle{definition}
\newtheorem{question}[theorem]{Question}
\theoremstyle{remark}
\newcommand{\dist}{\operatorname{dist}}
\begin{document}

\title{\texorpdfstring{\vspace{-1.5cm}}{}
Bounded Degree Spanners of the Hypercube}
\date{}
\author{
Rajko Nenadov\thanks{Department of Mathematics, ETH, Z\"urich, Switzerland. }
\and
Mehtaab Sawhney\thanks{Massachusetts Institute of Technology, Cambridge, MA 02139, USA. Email:
\href{mailto:msawhney@mit.edu} {\nolinkurl{msawhney@mit.edu}}.
}
\and
Benny Sudakov\thanks{Department of Mathematics, ETH, Z\"urich, Switzerland. Email:
\href{mailto:benjamin.sudakov@math.ethz.ch} {\nolinkurl{benjamin.sudakov@math.ethz.ch}}.
Research supported in part by SNSF grant 200021-175573.}
\and
Adam Zsolt Wagner\thanks{Department of Mathematics, ETH, Z\"urich, Switzerland. Email:
\href{mailto:zsolt.wagner@math.ethz.ch} {\nolinkurl{zsolt.wagner@math.ethz.ch}}.}
}
\maketitle
\begin{abstract}
    In this short note we study two questions about the existence of subgraphs of the hypercube $Q_n$ with certain properties. The first question, due to Erd\H{o}s--Hamburger--Pippert--Weakley, asks whether there exists a bounded degree subgraph of $Q_n$ which has diameter $n$. We answer this question  by giving an explicit construction of such a subgraph with maximum degree at most 120.
    
    The second problem concerns properties of $k$-additive spanners of the hypercube, that is, subgraphs of $Q_n$ in which the distance between any two vertices is at most $k$ larger than in $Q_n$. Denoting by $\Delta_{k,\infty}(n)$ the minimum possible maximum degree of a $k$-additive spanner of $Q_n$, Arizumi--Hamburger--Kostochka showed that
    $$\frac{n}{\ln n}e^{-4k}\leq \Delta_{2k,\infty}(n)\leq 20\frac{n}{\ln n}\ln \ln n.$$ We improve their upper bound by showing that 
    $$\Delta_{2k,\infty}(n)\leq 10^{4k} \frac{n}{\ln n}\ln^{(k+1)}n,$$where the last term denotes a $k+1$-fold iterated logarithm.
\end{abstract}

\section{Introduction}
Let $Q_n$ denote the hypercube graph, with vertex set $\{0,1\}^n$ with edges connecting two vertices if they differ in precisely one coordinate. Sparse subgraphs of the hypercube with strong distance-preserving properties have been studied extensively in the literature, and have found many practical applications in distributed computing and communication networks. We refer the reader to the recent survey~\cite{spannersurvey}.

Erd\H{o}s--Hamburger--Pippert--Weakley~\cite{Erdos} studied spanning subgraphs of $Q_n$ with diameter $n$. They observed that there exists such a subgraph with average degree $2+O\left(\frac{1}{\sqrt{n}}\right)$, however in their construction there were vertices of degree $n$. They asked the following natural question:
\begin{question}[Erd\H{o}s--Hamburger--Pippert--Weakley~\cite{Erdos}]\label{ques:erdos}
Does there exist a spanning subgraph of $Q_n$ with bounded degree and diameter $n$?
\end{question}
Our first result is an explicit construction giving a positive answer to~\cref{ques:erdos}.
\begin{theorem}\label{thm:main}
There exists a spanning subgraph $G$ of $Q_n$ with maximum degree at most $120$ such that the diameter of $G$ is $n$.
\end{theorem}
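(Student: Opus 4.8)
The plan is to construct a spanning subgraph $G \subseteq Q_n$ of bounded degree whose diameter equals $n$. The key observation is that the diameter of $Q_n$ is exactly $n$ (the distance between a vertex and its antipode), so we need $G$ to preserve this maximum distance: it suffices to ensure that for the pair $\mathbf{0} = 00\cdots 0$ and $\mathbf{1} = 11\cdots 1$ (and by the vertex-transitivity of $Q_n$, effectively for all antipodal pairs, but the construction must be explicit and not symmetric), the distance in $G$ is still $n$, while simultaneously $G$ remains \emph{connected}. Any spanning connected subgraph of $Q_n$ automatically has diameter \emph{at least} $n$ because shortest paths cannot beat the Hamming distance of an antipodal pair; the real content is proving the diameter is at most $n$, i.e. that between \emph{every} pair of vertices at Hamming distance $d$ there is a path in $G$ of length at most $n$.

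First I would design a recursive or block-based construction. A natural idea is to partition the $n$ coordinates into blocks of constant size $b$ and build the spanner as a kind of product or ``tree of blocks'' structure. Within each block one includes enough edges (a bounded-degree gadget) to allow full local reconfiguration, while across blocks one connects the blocks along a bounded-degree backbone (for instance a path or low-degree tree on the blocks) so that any coordinate can eventually be flipped by routing through the backbone. The degree bound of $120$ strongly suggests splitting the degree budget into a constant number of roles: a few edges per vertex for intra-block moves, a few for inter-block transitions, and some slack; one would choose the block size $b$ and the gadgets so the total degree stays under $120$ uniformly.

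The heart of the proof is the distance/diameter bound. I would show that for any two vertices $u, v$ differing in a set $S$ of coordinates, one can flip the coordinates in $S$ one at a time, staying inside $G$, using a total of exactly $|S|$ edges — so the $G$-distance equals the Hamming distance — at least for antipodal-type pairs, and for all pairs is at most $n$. The mechanism is a \emph{routing scheme}: to flip coordinate $i$, move along the backbone to a vertex where the gadget controlling coordinate $i$ permits the flip, perform it, and return, all without increasing the count of wrong coordinates and without exceeding $n$ total flips. The subtlety is that auxiliary ``navigation'' flips (moving along the backbone) must be paid for within the budget of $n$; the cleanest way is to ensure each such navigation step flips a coordinate toward its target, so no move is wasted, giving an exact length-$d$ path for the $d$ differing coordinates.

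The main obstacle, and where I would concentrate the argument, is guaranteeing a monotone (non-wasteful) routing scheme simultaneously with the bounded-degree constraint: bounded degree forces us to route flips through shared gadgets, which risks forcing a coordinate to be flipped twice (once to enable access, once back), thereby pushing the path length above $n$ and destroying the diameter bound. Overcoming this requires a carefully structured gadget — likely an explicit constant-size graph on $\{0,1\}^b$ that is simultaneously a ``universal reconfiguration'' graph (any target reachable monotonically) and low-degree — combined with a backbone whose traversal coincides with flipping genuine target coordinates. Verifying that the composed routing never repeats a coordinate, and that the degree of every vertex (counting both intra- and inter-block edges at block boundaries) stays at most $120$, is the delicate calculation that anchors the proof.
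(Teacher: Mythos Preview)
Your proposal identifies the right difficulty but does not resolve it, and the specific architecture you sketch (constant-size blocks, monotone routing along a bounded-degree backbone) runs into a genuine obstruction. Monotone routing---where every flip moves a coordinate toward its target---is incompatible with bounded degree already for pairs at Hamming distance~$1$: if $u$ and $v$ differ only in coordinate $i$ and $u$ has no $G$-edge in direction $i$ (which must happen for most $i$ once the degree is bounded), then any $u$--$v$ path must flip some correct coordinate and later unflip it, so $d_G(u,v)\ge 3$. Hence your claimed ``exact length-$d$ path'' cannot exist in general. That alone is harmless for the diameter bound, but it means the decisive case is near-antipodal pairs, where the budget for wasted flips is zero or $O(1)$, and your plan gives no mechanism to control this. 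With $\Theta(n)$ constant-size blocks and a bounded-degree backbone on them, reaching a distant block costs many navigation flips; if even one of those is not a target flip you already exceed $n$ for antipodes. You name this as ``the main obstacle'' but then only gesture at an unspecified gadget; that gadget \emph{is} the entire proof, and it is missing.

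The paper's construction is quite different and worth contrasting. It uses only \emph{four} blocks $B_1,\dots,B_4$, each of size roughly $n/4$, not constant. Within each block $B_i$ it first builds a degree-$10$ subgraph of $Q_{n_i}$ consisting of translates of a single $2n_i$-cycle through antipodal pairs, the translates chosen via a \emph{nearly perfect code} so that every vertex lies on at least one and at most five such cycles. Along each cycle the $n_i$ positions are labelled by a partition of $[n]\setminus B_i$ into $n_i$ parts of size $\le 4$, and at a vertex one also keeps the (at most four) edges in the directions of its current part. The routing is deliberately \emph{non}-monotone: to go from $v$ to $w$ one walks the full antipodal half-cycle in block~$1$ (cost $n_1$), and along the way one sees every part of the partition, so one can fix blocks $3,4$ to agree with $w$ and send block~$2$ to the antipode of $w_2$; then one walks the full half-cycle in block~$2$ (cost $n_2$) and fixes block~$1$ along the way. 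The total length is $n_1+n_2+(n_3-k_3)+(n_4-k_4)+k_1+k_2=n+(k_1+k_2-k_3-k_4)\le n$, where $k_i$ is the number of agreeing coordinates in $B_i$ and the blocks are ordered so that $k_1\le k_2\le k_3\le k_4$. The degree bound $120$ is $4\cdot 10$ for the four cycle-graphs plus $4\cdot 5\cdot 4$ for the partition edges (at most five cycles through a vertex, parts of size at most four). None of these ingredients---large blocks, the antipodal-cycle cover via codes, or the overshoot-and-cancel arithmetic---appear in your outline.
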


One particular distance-preserving property that has received much attention in the past is that of an \emph{additive spanner}. We say a subgraph $G\subset Q_n$ is a \emph{$k$-additive spanner} if $\dist_{G}(x,y)\leq \dist_{Q_n}(x,y)+k$ for any two vertices $x,y\in\{0,1\}^n$. Constructions of additive spanners with few edges and/or low maximum degree have attracted considerable attention in computer science in the past, see e.g.~\cite{add5,add1,add6,add3,add2,add4,add7} and the many references therein.

Arizumi--Hamburger--Kostochka~\cite{AHK} denoted by $\Delta_{k,\infty}(n)$ the minimum possible maximum degree of a $k$-additive spanner of $Q_n$. Note that since $Q_n$ is bipartite, by deleting edges the distance can only grow by an even amount. They showed that for $k\geq 2$ and $n\geq 21$ we have $$\frac{n}{\ln n}e^{-4k}\leq \Delta_{2k,\infty}(n)\leq 20\frac{n}{\ln n}\ln \ln n.$$ Their lower bound is a short argument given by counting the vertices of a certain distance from a fixed vertex, and their upper bound is an explicit construction. Our second result is an improvement of their upper bound on this problem.
\begin{theorem}\label{thm:main2}
For all $n$ sufficiently large and $k\in \mathbb{N}$, there exists a $2k$-additive spanner of the hypercube with maximum degree at most  
\[10^{4k}\frac{n\ln^{(k+1)} n}{\ln n}.\]
\end{theorem}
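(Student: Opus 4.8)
The plan is to construct the spanner recursively in $k$: each extra unit of additive error (passing from a $2(k-1)$- to a $2k$-additive spanner) will let us peel off one more iterated logarithm from the degree. Write $D_k(n)$ for the maximum degree our construction attains for a $2k$-additive spanner of $Q_n$. The base case is $k=0$, where the hypercube $Q_n$ is itself a $0$-additive spanner of maximum degree $D_0(n)=n$ (note this already matches the claimed bound $10^{0}\,n\ln^{(1)}n/\ln n=n$). The inductive engine is a boosting lemma of the following shape: \emph{from a $2(k-1)$-additive spanner of $Q_n$ of maximum degree $\Delta$ one can build a $2k$-additive spanner of $Q_n$ of maximum degree at most $10^4\frac{n}{\ln n}\ln\!\big(\frac{\Delta\ln n}{n}\big)$}, valid whenever $\Delta$ exceeds the baseline $n/\ln n$. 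Granting the lemma, the recursion
\[
D_k(n)\ \le\ 10^4\,\frac{n}{\ln n}\,\ln\!\Big(\tfrac{D_{k-1}(n)\ln n}{n}\Big)
\]
closes the induction: substituting $D_{k-1}(n)=10^{4(k-1)}\,n\ln^{(k)}n/\ln n$ turns the argument of the logarithm into $10^{4(k-1)}\ln^{(k)}n$, whose logarithm is $\ln^{(k+1)}n+O(k)$, and the prefactor $10^{4k}$ easily absorbs the additive $O(k)$ term for $n$ large.

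To prove the boosting lemma I would partition $[n]$ into $m$ consecutive blocks $B_1,\dots,B_m$ of a length $b$ to be optimized, and route between vertices $x,y$ block by block, touching only those blocks on which $x$ and $y$ differ. Inside a block that is being processed I want to flip exactly the coordinates of $(x\oplus y)\cap B_i$, each once, so that the block itself contributes no excess to the path length; the given $2(k-1)$-spanner is reused, as a sparse \emph{backbone}, only to move between the ``canonical'' configurations that record which blocks are still outstanding and to jump between successive active blocks. A vertex then needs to retain only the $b$ flips of its currently active block together with the backbone edges, rather than all $n$ potential flips, and balancing these two contributions against each other is exactly what replaces the naive $\Delta$ by the factor $\ln(\Delta\ln n/n)$.

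The hard part is the additive error, and in particular preventing it from accumulating. A block-by-block reconfiguration that invoked the inductive spanner afresh for every block would pay its slack $2(k-1)$ once per touched block, i.e.\ up to $\Theta(\dist_{Q_n}(x,y))$ times in total, which is hopeless. The whole point of the boosting lemma is therefore error \emph{localization}: the inductive spanner, and with it its $2(k-1)$ of slack, must be entered only once — on the single backbone layer — while every genuine block of coordinates is reconfigured along an honestly shortest in-block path, so that the total excess over $\dist_{Q_n}(x,y)$ is $2(k-1)+2=2k$ no matter how many blocks differ. Making this precise requires a canonical ``processed\,/\,not-yet-processed'' form that can always be advanced without backtracking inside a finished block, together with a careful choice of $b$ minimizing the per-vertex degree; I expect this error-localization, rather than the degree bookkeeping, to be the real obstacle.
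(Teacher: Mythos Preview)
Your recursion shape is right---gain one iterated logarithm per extra unit of additive error---and matches the paper's high-level plan. But the boosting lemma, as you have stated it, cannot hold with the construction you sketch, and the missing piece is exactly the one the paper spends its effort on.

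The problem is that ``currently active block'' is a property of a \emph{route}, not of a \emph{vertex}. The spanner is a fixed subgraph: the edges retained at $v$ cannot depend on which path you happen to be walking through $v$. If the backbone is literally the $2(k-1)$-spanner of $Q_n$ included as a subgraph, every vertex already has degree $\Delta$ and your output degree is at least $\Delta$, so the recursion $D_k\le 10^4\tfrac{n}{\ln n}\ln(\Delta\ln n/n)<\Delta$ is impossible. If instead the backbone is meant to live on a reduced space of ``canonical configurations recording which blocks are outstanding,'' that space depends on the endpoints $x,y$, so again you are not describing a fixed subgraph. Either reading collapses.

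What the paper does to break this is assign to each vertex a \emph{static} subset of blocks, independent of any route. A small coordinate set $B_0$ of size $\Theta(\sqrt n)$ carries all its edges; perfect codes on $B_0$ partition the vertices into classes $D_i$, and a bijection sends each class to an $s$-subset of the $t$ remaining blocks. On those $\approx ns/t$ coordinates the inductive $2(k-1)$-spanner is laid down---so its degree contribution is $D_{k-1}(ns/t)$, not $D_{k-1}(n)$, and this is what makes the recursion decrease. To reach the blocks \emph{outside} the chosen $s$-subset, each vertex also carries edges in one group $A_{s'}$ of blocks, where $s'$ is its coordinate sum modulo $s/500^k$. The crux---the real analogue of your ``error localization''---is an additional invariant carried through the induction: the $2(k-1)$-spanner path between points at distance $\ell$ visits at least $\ell/32^{k}$ distinct coordinate sums. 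This guarantees that while walking the inductive spanner inside the chosen blocks, one hits every residue class $s'$, so every outside block becomes accessible somewhere along the way and can be fixed at no extra cost. The inductive spanner is thus invoked exactly once, giving the $+2(k-1)$, and the two detours into the perfect code give the extra $+2$.

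Your proposal has none of this machinery: no static per-vertex block assignment, no mechanism to reach blocks outside the backbone, and nothing playing the role of the coordinate-sum invariant. The ``processed/not-yet-processed'' form you allude to is route-dependent and cannot be encoded in a fixed bounded-degree subgraph. So the gap is not merely bookkeeping for the additive error; as written, you have not yet specified a well-defined subgraph.
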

Note here that $\ln^{(k+1)} n$ is the $k+1$-times iterated logarithm, defined by $\ln^{(1)} n=\ln n$ and $\ln^{j+1} n=\ln\left(\ln^{(j)} n\right)$.

We prove~\cref{thm:main} in~\cref{sec:mainthm} and prove~\cref{thm:main2} in~\cref{sec:mainthm2}. Some open questions and further directions of study are given in~\cref{sec:concl}.

\section{Bounded degree subgraph preserving diameter}\label{sec:mainthm}

In the present paper, a \emph{perfect code} will always mean a perfect 1-error-correcting code over the alphabet $\{0,1\}$ with codewords having length $n$. We say that $\mathcal{C}$ is a perfect $1$-error-correcting code if any two codewords have Hamming distance at least three, and moreover the radius one Hamming balls centered on the codewords partition the whole space $\{0,1\}^n$. Note that the number of codewords in a perfect code $\mathcal{C}$ is $|\mathcal{C}|=\frac{2^n}{n+1}$. Perfect codes exist whenever $n=2^r-1$ for some $r\in\mathbb{N}$, see e.g.~\cite{perfectcodes}. We will use the fact that for all $n=2^r-1$, $r\in\mathbb{N}$, it is possible to partition the space $\{0,1\}^n$ into $n+1$ perfect codes (see e.g.~\cite{perfectcodes}, p. 15.).

In this section we prove~\cref{thm:main}. We first need the following technical lemma.
\begin{lemma}
For all $n$ there is a subset $S$ of vertices of $Q_{n}$ with the following two properties. 
\begin{itemize}
    \item Every vertex of $v$ is either in $S$ or adjacent to a member of $S$.
    \item Every vertex of $v$ is adjacent to at most $2$ vertices in $S$.
\end{itemize}
We refer to such a subset of vertices $S$  of $Q_n$ as a \emph{nearly perfect code}.
\end{lemma}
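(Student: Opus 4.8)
The plan is to reduce to the case $n=2^r-1$, where perfect codes and their partition of the whole space are available, and to pay for the leftover coordinates by a clever assignment of codes to ``slices''. Choose $r=\floor{\log_2(n+1)}$ and set $m=2^r-1$ and $d=n-m$, so that $m\le n<2^{r+1}-1$ and hence $d\le m$. I would identify $\{0,1\}^n$ with $\{0,1\}^m\times\{0,1\}^d$, writing a vertex as $(x,w)$, and invoke the fact quoted at the start of the section to fix a partition of $\{0,1\}^m$ into $m+1=2^r$ perfect codes $\mathcal{C}_0,\dots,\mathcal{C}_m$. The idea is to place one of these codes into each slice $\{0,1\}^m\times\{w\}$: by the perfect-code property every vertex of the slice is dominated within the slice and has at most one in-slice neighbor in $S$. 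The only remaining danger is that a vertex collects too many neighbors from adjacent slices, and the heart of the argument is to choose which code goes into which slice so as to avoid this.

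To control the cross-slice contribution I would color the slices by a carefully chosen map $i\colon\{0,1\}^d\to\{0,\dots,m\}$ and set
\[
S=\setcond{(x,w)}{x\in\mathcal{C}_{i(w)}}.
\]
Identifying the color set $\{0,\dots,m\}$ with $\FF_2^{\,r}$ (both have $2^r$ elements) and using $d\le m=2^r-1$, pick $d$ \emph{distinct nonzero} vectors $c_1,\dots,c_d\in\FF_2^{\,r}$ and define the linear coloring $i(w)=\sum_{k=1}^d w_k c_k$. Its key feature is that the $d$ neighbors $w+e_k$ of any fixed $w$ receive the pairwise distinct colors $i(w)+c_k$, all different from $i(w)$ itself. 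In other words, the neighborhood of every vertex of the slice-cube $Q_d$ is rainbow-colored, and no slice shares its color with an adjacent slice.

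It then remains to verify the two properties. Domination is immediate: within the slice of $(x,w)$ the code $\mathcal{C}_{i(w)}$ is perfect, so $x$ either lies in $\mathcal{C}_{i(w)}$ or is at Hamming distance one from a unique codeword there. For the degree bound I would split the neighbors of $(x,w)$ in $S$ into in-slice and cross-slice ones. The perfect-code property gives at most one in-slice neighbor in $S$. For the cross-slice neighbors, note that $x$ lies in exactly one code $\mathcal{C}_{j(x)}$, so $(x,w')\in S$ precisely when the adjacent slice $w'$ satisfies $i(w')=j(x)$; since the neighbors of $w$ carry distinct colors, at most one of them does, yielding at most one cross-slice neighbor. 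Hence every vertex has at most $2$ neighbors in $S$, as required.

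I expect the main obstacle to be exactly this cross-slice count: a priori a vertex could see up to $d$ neighboring slices all carrying its own code $\mathcal{C}_{j(x)}$. The whole point is that the palette has $2^r=m+1$ colors while the slice-cube has dimension only $d\le m$, which is precisely what the choice $r=\floor{\log_2(n+1)}$ guarantees, and it is this surplus that lets the linear coloring make every neighborhood rainbow. Everything else—the existence and the partition of perfect codes, and the distance-one uniqueness inside a single slice—is either quoted or routine.
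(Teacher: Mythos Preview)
Your argument is correct, but it is genuinely different from the paper's. The paper does not split $Q_n$ as $Q_m\times Q_d$ and assign a code to each slice; instead it fixes a single perfect code $\mathcal{C}\subset Q_m$, groups the $n$ coordinates into $m=2^r-1$ buckets of size at most two, and takes $S$ to be the preimage of $\mathcal{C}$ under the $\FF_2$-linear ``bucket-parity'' map $Q_n\to Q_m$. The count of neighbors in $S$ then reduces to the observation that the unique codeword adjacent to a non-codeword has at most two preimage-neighbors, because each bucket has size at most two. Your route instead exploits the stronger quoted fact that $Q_m$ partitions into $m+1$ perfect codes, together with a rainbow linear coloring of the $Q_d$ factor, which cleanly separates the in-slice and cross-slice contributions (each at most one). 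The paper's proof is a bit more economical in that it only needs the existence of one perfect code, whereas yours needs the full partition; on the other hand, your decomposition makes the source of the ``$2$'' completely transparent and would generalize more readily if one wanted different bounds on the two contributions.
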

\begin{proof}
Note that for $n = 2^{k}-1$ the result follows from the existence of perfect codes. For other values of $n$ let $k$ be such that $n$ is between $2^{k}-1$ and $2^{k+1}-1$ and divide the $n$ coordinates into $2^{k}-1$ buckets of size at most $2$. Now take a perfect code $\mathcal{C}$ of $Q_{2^{k}-1}$. We define $S$ by taking the sum in $\mathbb{F}_2$ of each bucket and take the points whose image under this transformation is in the perfect code of $Q_{2^{k}-1}$. It is now straightforward to verify that $S$ satisfies the necessary conditions. Indeed, given any $w\in Q_n$, consider the word $u\in Q_{2^k-1}$ obtained from by by taking the sum in $\mathbb{F}_2$ of each bucket. Then $u$ is either in $\mathcal{C}$ or adjacent to some $u'$ in $\mathcal{C}$. In the first case $w\in S$ and $w$ is not adjacent to any other element of $S$. In the second case $w$ is adjacent to exactly all those $w'\in \mathcal{C}$ which agree with $w$ everywhere except on the block which corresponds to the bit in which $u$ and $u'$ differ. Moreover on this block $w'$ has different parity from $w$. Since all blocks have length at most 2, we conclude that $w$ is adjacent to at most two members of $S$.
\end{proof}

We now proceed first by proving a weaker version \cref{thm:main} which only requires preserving the distance between antipodes. We say two vertices are antipodal if they differ on all $n$ coordinates.

\begin{lemma}\label{lem:init}
There exists a subgraph $H$ of $Q_n$ with max degree $10$ such that all antipodes are at distance $n$ within $H$. 
\end{lemma}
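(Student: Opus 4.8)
The plan is to reduce this geometric problem in $Q_n$ to a purely combinatorial routing problem on a hypercube of logarithmic dimension, exploiting the algebra behind perfect codes. First observe that, since $\bar x$ is the antipode of $x$, a path of length exactly $n$ from $x$ to $\bar x$ must flip every coordinate exactly once, and conversely any ordering of the coordinates yields such a geodesic. Thus the lemma asks for a subgraph $H$ that contains, for each antipodal pair, one full ``flip-sequence,'' while every vertex is incident to at most $10$ of its $n$ cube-edges. A naive product/doubling construction does preserve antipodal distances, but the maximum degrees simply add over the factors, giving a bound that grows with $n$; the real point is therefore to force many different antipodal geodesics to re-use the \emph{same} edges.

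To arrange this sharing I would pass to the syndrome map of a Hamming code. Concretely, the nearly perfect code of the previous lemma is built from $2^{k}-1$ buckets together with a perfect (Hamming) code in $Q_{2^{k}-1}$, and this yields a map $\sigma\colon\{0,1\}^n\to\FF_2^{k}$ such that flipping coordinate $i$ changes the syndrome by a fixed nonzero vector $h_i\in\FF_2^{k}$, where $i\mapsto h_i$ ranges over all nonzero vectors and is at most $2$-to-$1$ (two coordinates share a value exactly when they lie in a common bucket). Since $\mathbf 1$ lies in the code we have $\sigma(\bar x)=\sigma(x)$, so antipodes share a syndrome, and a geodesic from $x$ to $\bar x$ becomes exactly an ordering of the flips whose syndrome-steps form a closed walk in $\FF_2^{k}$ based at $\sigma(x)$, using each nonzero label $h_j$ as often as its bucket is large (once or twice).

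I would then define $H$ by a \emph{symmetric} allowed-direction rule: assign to each syndrome $s$ a set $N(s)$ of at most $10$ nonzero vectors, symmetric in the sense that $h\in N(s)\iff h\in N(s\oplus h)$, and keep the cube-edge flipping coordinate $i$ at a vertex $w$ precisely when $h_i\in N(\sigma(w))$. The symmetry guarantees that $w$ is incident to exactly the directions $\{\,i:h_i\in N(\sigma(w))\,\}$, so $\deg_H(w)\le|N(\sigma(w))|\le 10$ automatically. With this reduction the entire lemma collapses to a finite statement on the logarithmic-dimensional cube $\FF_2^{k}\cong Q_k$: for every basepoint $s_0$ there is a closed walk from $s_0$ that uses each nonzero label the correct number of times and whose every step $s\to s\oplus h$ satisfies $h\in N(s)$. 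The main obstacle is to produce a \emph{single} $N$ of constant size $10$ for which such a walk exists from every basepoint simultaneously; I would attack this by building an explicit Eulerian-type skeleton in $\FF_2^{k}$ — equivalently, selecting edges from the colour classes of the standard $1$-factorisation of $K_{2^{k}}$ (one per singleton bucket, two per size-$2$ bucket) so that they form connected, even-degree traversals through each vertex — and verifying, by a short recursion on $k$ together with the bucket multiplicities being at most $2$, that a constant degree budget already suffices. Finally, because $k\approx\log n$ the construction is uniform in $n$, so no product step enters and the degree does not accumulate.
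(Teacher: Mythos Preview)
Your syndrome-based reduction is a legitimate framework, and in fact the paper's construction can be viewed through this lens. But the entire difficulty of the lemma lies precisely in the step you defer: actually producing the function $N$. Your sketch (``an explicit Eulerian-type skeleton \dots\ by a short recursion on $k$'') is where the proof has to happen, and it does not go through as stated. Concretely, take $n=2^{k}-1$, so every bucket has size $1$ and each label is used once. A closed walk from $s_0$ using each nonzero label exactly once traverses $2^{k}-1$ distinct edges of $K_{2^{k}}$ (distinct because distinct colours determine distinct edges), hence is an Eulerian circuit of a connected even-degree subgraph on at most $2^{k}-1<2^{k}$ vertices. So no single such walk can serve every basepoint, and you must pack many of these rainbow circuits into one bounded-degree $N$; nothing in the proposal explains how to do this, and that packing is the whole lemma. (Two smaller slips: when a bucket has size $2$ you get $\deg_H(w)=|\{i:h_i\in N(\sigma(w))\}|\le 2|N(\sigma(w))|$, not $|N(\sigma(w))|$; and $\sigma(\bar x)=\sigma(x)$ requires $\mathbf 1$ to lie in the code, which is not automatic for arbitrary bucket choices.)

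The paper bypasses the abstract routing problem with one explicit object. It writes down a single $2n$-cycle $C$ in $Q_n$ --- the vertices $(0,\dots,0),(1,0,\dots,0),\dots,(1,\dots,1),(0,1,\dots,1),\dots,(0,\dots,0,1)$ --- on which every vertex is antipodal to the one $n$ steps away. It then changes basis to $f_j=e_1+\dots+e_j$, so that the vertex set of $C$ becomes $\{\mathbf 0\}\cup\{f_j\}\cup\{f_n+f_j\}$, and takes $H$ to be the union of the translates $s+C$ over a nearly perfect code $S'$ expressed in the $f$-basis. Every vertex lies on some translate (covering), and if $v\in s+C$ then $s$ equals $v$, an $f$-neighbour of $v$, or an $f$-neighbour of $v+f_n$; the nearly-perfect-code property then bounds the number of such $s$ by $5$, giving $\Delta(H)\le 10$. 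The change of basis is exactly the idea your outline is missing: it collapses the long cycle $C$ into two Hamming balls of radius $1$ in the new coordinates, which is what lets a code of covering radius $1$ control the overlaps by a constant.
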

\begin{proof}
Consider the set of vertices $(0,\ldots,0)$, $(1,0,\ldots,0)$, $(1,1,0,\ldots, 0)$, $\ldots$, $(1,\ldots,1)$, $(0,1,\ldots,1)$, $(0,0,1,\ldots,1)$, $\ldots$, and $(0,\ldots,0,1)$, i.e.~all vertices with coordinates having all $0$'s and then followed by $1$'s  or having all $1$'s and then followed by $0$'s . Note that these points form a cycle $C$ of length $2n$ in $Q_n$ such that there are $n$ pairs of antipodes along this cycle. In particular, if a vertex is on $C$ then there is a path of length $n$ from this vertex to its antipode using only edges of $C$.

The construction of $H$ is to translate this $2n$-cycle $C$ by the appropriate nearly perfect code so that every vertex is contained in one of these cycles. By the above discussion, if some vertex is on a translate of $C$ then so is its antipode and therefore their distance in $H$ is $n$.

Let $(e_i)_{i=1}^n$ be the standard basis vectors in $\mathbb{F}_2^{n}$ and for $1\leq k \leq n$ define $f_k$ as  
\[f_k = \sum_{i=1}^{k} e_i.\] Note that the vectors $f_i$ form a basis of $\mathbb{F}_2^{n}$ and that the vertices of the cycle $C$ are exactly $\textbf{0}, f_1,\ldots, f_n,f_n-f_1, f_n-f_2,\ldots,f_n-f_{n-1}$. Now consider a nearly perfect code $S'$ in the basis of the $f_i$ vectors and all the translates $s+C$ where $s\in S'$. First note that every element is contained in at least one cycle as we have translated $0$ by a nearly perfect code in the $f_i$ basis and all basis vectors $f_1,\ldots,f_n$ are in the cycle. To see that no vertex however is in more than five cycles consider the element $s\in S'$ so that $v\in s+C$. Then it follows that 
$v = s, s+ f_j,$ or $s+f_n-f_j$ for some $j$ and therefore $s = v, v-f_j$, or  $v-f_n+f_j$. Thus $s$ is either $v$, a  neighbor of $v$, or a neighbor of $v-f_n$ and by the definition of nearly perfect codes we have that every vertex is either in the code or next to at most two other code words we have that every vertex is in at most $5$ cycles and hence has degree at most $10$.
\end{proof}
We now use \cref{lem:init} to complete the proof of \cref{thm:main}; note that the fact that the construction comes from a union of these antipodal cycles plays a critical role in \cref{thm:main}.
\begin{proof}
For $n\le 100$ note that taking $Q_n$ suffices. Otherwise define $n_i$ such that $n = \sum_{i=1}^{4} n_i$, $n_i = \lfloor \frac{n}{4}\rfloor$ or $n_i = \lceil \frac{n}{4}\rceil$, and $n_i\ge n_j$ for $i\le j$. Now associate $Q_n$ with its representation on $\{0,1\}^{n}$ and divide the coordinates in blocks $B_1,B_2,B_3,B_4$ with $B_i$ having block size $n_i$. Finally for each vertex $v$ define $v_i$ to be restriction of $v$ to the block $B_i$.

We first assign a vertex $v$ its neighbors based only on its restrictions $v_i$. Consider $H_i$ that is a subgraph of $Q_{n_i}$ coming from \cref{lem:init}. Now consider an almost equal partition, $M_i$, of $[n]\setminus B_i$ into $n_i$ parts. As $n\geq 100$, note that every part has size at most four. Now as one walks along each of the antipodal cycles of size $2n_i$ in $H_i$ assign to each vertex a part of $M_i$ such that any path between antipodes covers all parts of the partition $M_i$. (In order to do so simply order the partition $M_i$ and assign its parts cyclically along the cycles of $H_i$.) We now define the neighbors of $v$ by considering its restriction to $v_i$ for each $i$ and keeping the neighbors in the directions which $v_i$ has in $H_i$ plus the neighbors in the directions of the part of the partition $M_i$ of $[n]\setminus B_i$ to which it was assigned. This is the  desired subgraph $G$ if the hypercube.  

We first verify that the max degree of $G$ is at most $120$. First note that since the $H_i$ have max degree at most $10$ and we have four parts the contribution to each vertex $v's$ degree from the $H_i$ is bounded by $40$. Furthermore since each part of $M_i$ has size at most four, there are at most four edges outgoing from $v$ due to this part. Since every vertex was on at most five cycles in $H_i$, the total number of edges outgoing from $v$ due to the partition of $[n]\setminus B_i$ into $n_i$ parts is at most $5\cdot 4 = 20$. The only subtlety is now  to account for edges incoming to $v$ from each of the partitions of $[n]/B_i$. But note that for such edges $v$ to $w$ we have $v_i = w_i$ and therefore this relationship is in fact symmetric unlike the asymmetric description. Therefore the total degree count is $40 + 4(20) = 120$.

Finally we demonstrate that $G$  is diameter $n$. Consider two vertices $v$ and $w$ such that $v_i$  and $w_i$ match on  exactly $k_i$ coordinates. Suppose that $k_1\le k_2\le k_3\le k_4$; the other cases are handled in an analogous manner. In order to ``fix" $v$ we first proceed along the cycle in $H_1$ to $v_1$'s antipode in order to obtain $x$. Along the way we can adjust the coordinates in $B_3$ and $B_4$ so that $x_3 = w_3$ and $x_4 = w_4$ while $x_2$ is the antipode of $w_2$. Note that we take $n_1$ steps along $H_1$, $k_2$ to change the coordinates in $B_2$ appropriately, and $n_3-k_3+n_4-k_4$ steps to fix $B_3$ and $B_4$. We now walk from $x_2$ to $w_2$ along $H_2$ and along the way fix $H_1$ so that it now matches $x_1$. This takes $k_1$ steps to fix $B_1$ to $w_1$ and $n_2$ to fix $B_2$ to $w_2$. Therefore the total number of steps is 
\[n_1+k_2+n_3-k_3+n_4-k_4+k_1+n_2\]
\[ = (n_1+n_2+n_3+n_4)+(k_1+k_2-k_3-k_4)\]\[\le (n_1+n_2+n_3+n_4) = n\]
as desired.
 
\end{proof}

%-----------------------------------------------------------------

\section{$k$-additive Spanner of the Hypercube}\label{sec:mainthm2}
The main goal of this section is to prove~\cref{thm:main2} by constructing a $2k$-additive spanner of the hypercube with small maximum degree. For the sake of clarity various floor and ceiling symbols will be omitted. The key idea is to essentially iterate the construction in~\cite{AHK} which achieves this result for the $k=1$ case (with a slightly better constant). 
\begin{proof}
We build the construction iteratively as $k$ increases. We will maintain the following invariant for the $2k$-additive spanners: for any pair of points which are distance $\ell$ apart there is a path connecting them of length at most $\ell+2k$ whose vertices have at least $\frac{\ell}{32^{k+1}}$ different coordinate sums. For $k=0$ the construction is taking the entire hypercube graph $Q_n$. This satisfies the necessary maximum degree condition and will serve as the base case for this induction on $k$. Note that in $Q_n$ between any two points at distance $\ell$ we may first flip all necessary zero coordinates to ones and then all required ones to zeros, ensuring the existence of a path of length $\ell$ whose vertices have at least $\ell/2$ different coordinate sums.  For the remainder of the proof we divide the coordinates into two groups. 
\begin{itemize}
\item Pick $r\in \mathbb{N}$ so that $2^{r}-1\in [\sqrt{n}/2,\sqrt{n}]$, and let $B_0$ be the first $q = 2^{r}-1$ coordinates.
\item $B_1,\ldots,B_t$ will be an almost equal partition of the remaining $n-q$ coordinates into $t = \frac{\ln n\cdot \ln^{(k)}n}{900\left(\ln^{(k+1)}n\right)^2}$ blocks. 
\end{itemize}
We define an additional parameter $s$ to be $s = \frac{\ln n}{10\ln^{(k+1)}n}.$ The construction now has three distinct parts.
\begin{itemize}
    \item Define $H_1$ to be subgraph created by including all edges in directions in $B_0$ for every vertex in $Q_n$. That is, two adjacent vertices $x,y\in Q_n$ are connected by an edge in $H_1$ precisely if they differ in only one of the first $q$ coordinates and nowhere else. Note that $H_1$ is a vertex-disjoint union of $2^{n-q}$ copies of $Q_{q}$.
    \item We now define the subgraph $H_2$. Since $q=2^{r}-1$, as remarked at the beginning of~\cref{sec:mainthm}, we can partition each disjoint copy of $Q_{q}$ in $H_1$ into perfect codes $D'_1,\ldots,D'_{q+1}$ and let $D_i$ be the union of the $D_{i}'$ over these disjoint components of $H_1$. Now  fix a bijective map $f$ from $\{D_1,D_2,\ldots,D_{\binom{t}{s}}\}$ to $\binom{[t]}{s}$. Define the subgraph $H_2$ as follows. For every vertex $x$, first find the index $i$ so that $x$ belongs to $D_i$. If $i\leq \binom{t}{s}$ then let $\{B_{i_1}, B_{i_2},\ldots,B_{i_s}\}$ be the set of $s$ blocks of coordinates from $B_1, \ldots, B_t$ corresponding to $D_i$. Next, let $B_x:=B_{i_1}\cup\ldots\cup B_{i_s}$, fix all coordinates of $x$ on $[n]\setminus B_x$, and on the coordinates in $B_x$ include the $2(k-1)$-additive spanner on $|B_x|$ many coordinates that is given by the induction hypothesis. For example, if $x$ belongs to $D_5$ and $f(D_5)=\{1,2,\ldots,s\}$ then we fix the coordinates of $x$ outside of $B_1\cup\ldots\cup B_s$ and include the $2(k-1)$-spanner construction given by the induction hypothesis on the approximately $(n-q)s/t$ coordinates in $B_1\cup\ldots\cup B_s$.
    \item We now define $H_3$. First divide $B_1,\ldots,B_t$ into groups of size $j=\frac{500^kt}{s}$; label these $A_1,\ldots,A_{s/500^k}$. Now for each vertex $x$ which belongs to a $D_i$ with $i\leq \binom{t}{s}$, let $B_x:=B_{i_1}\cup\ldots\cup B_{i_s}$ where $\{i_1,\ldots,i_s\}=f(D_i)$ as above. Next, let $s_x:=\sum_{i\in B_x}x_i$. Take this coordinate sum $s_x$ mod $s/500^k$, call this $s'$, and for the vertex $x$ only include edges in directions $A_{s'}\setminus B_{i_1}\cup\ldots\cup B_{i_s}$. 
\end{itemize}
Note that in $H_2$ and $H_3$ the edges are (implicitly) ``directed'' from one vertex to another. However one can verify that the definitions are symmetric in both  cases. First consider $H_2$. Note that  if a vertex $y$ agrees with $x$ on every coordinate outside of $B_x$ then in particular they agree on the first $q$ coordinates used to define the sets $D_i$. Therefore we have that $x$ and $y$ belong to the same $D_i$ and thus $B_x=B_y$. Hence in the construction of $H_2$ when we  consider  $y$ we include the same $2(k-1)$-spanner construction on $B_x$ as we did when considering the vertex $x$. 

Next consider $H_3$. If $x$ is connected to $y$ in $H_3$ this implies that the coordinate on which $y$ differ from $x$ lies outside of $B_x$ and also outside of the set of the first $q$  coordinates. Therefore, both $x$ and $y$ belong to the same set $D_i$, $B_x=B_y$, and also $s_x=s_y$. So in both cases the value of $s'$ is the same, hence in the construction of $H_3$ we included edges touching $x$ and $y$ in precisely the same directions.

The desired subgraph of $Q_n$ is simply $H = H_1\cup H_2\cup H_3$. We first verify that the subgraph $H_2$ is well defined in that the desired bijection $f$ indeed exists.
\begin{lemma}
For $n$ sufficiently large, we have
\[\binom{t}{s}\le \sqrt{n}/2.\]
\end{lemma}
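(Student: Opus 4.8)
The plan is to verify the inequality $\binom{t}{s} \le \sqrt{n}/2$ by bounding $\binom{t}{s}$ using the standard estimate $\binom{t}{s} \le (et/s)^s$ and then taking logarithms to reduce the claim to a comparison of elementary expressions in $\ln n$ and its iterates. Substituting the definitions $t = \frac{\ln n \cdot \ln^{(k)} n}{900 (\ln^{(k+1)} n)^2}$ and $s = \frac{\ln n}{10 \ln^{(k+1)} n}$, the ratio is
\[
\frac{t}{s} = \frac{10 \ln^{(k)} n}{900 \ln^{(k+1)} n} = \frac{\ln^{(k)} n}{90 \ln^{(k+1)} n}.
\]
So $et/s \le \ln^{(k)} n$ comfortably for $n$ large, and taking logs gives
\[
\ln \binom{t}{s} \le s \ln(et/s) \le s \ln \ln^{(k)} n = s \cdot \ln^{(k+1)} n = \frac{\ln n}{10 \ln^{(k+1)} n} \cdot \ln^{(k+1)} n = \frac{\ln n}{10}.
\]

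First I would record the inequality $\binom{t}{s} \le (et/s)^s$ and the bound on $t/s$ above, noting that for $n$ large enough $et/s \le \ln^{(k)} n$, so that $\ln(et/s) \le \ln^{(k+1)} n$. Then the one-line computation above yields $\ln \binom{t}{s} \le \frac{\ln n}{10}$, hence $\binom{t}{s} \le n^{1/10}$, which is certainly at most $\sqrt{n}/2$ for all sufficiently large $n$. The telescoping of $s$ against $\ln^{(k+1)} n$ is the crux: the parameters $s$ and $t$ were evidently chosen precisely so that $s \ln^{(k+1)} n = \frac{\ln n}{10}$, which is the source of the clean bound.

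The only genuine subtlety concerns uniformity in $k$. Since $k$ ranges over all of $\mathbb{N}$ and the iterated logarithms $\ln^{(k)} n$ are only defined (and positive) once $n$ is large relative to $k$, I would want to be a little careful that the estimate $et/s \le \ln^{(k)} n$ holds in the relevant regime; however, the iteration in the main proof only descends through finitely many values of $k$ for any fixed $n$, and at each stage $\ln^{(k+1)} n \ge 1$ and $\ln^{(k)} n$ is large, so the bound $e t/s \le \ln^{(k)} n$ is easily met. I expect the main (indeed only) obstacle to be bookkeeping around these edge cases of small iterated logarithms; the analytic heart is the exact cancellation $s \cdot \ln^{(k+1)} n = \ln n / 10$, after which everything is immediate.
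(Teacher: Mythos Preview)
Your proof is correct and follows essentially the same route as the paper: both use the standard bound $\binom{t}{s}\le (et/s)^s$, compute $t/s$, observe that $et/s\le \ln^{(k)}n$ so that $\ln(et/s)\le \ln^{(k+1)}n$, and then use the cancellation $s\cdot\ln^{(k+1)}n = \ln n/10$ to conclude $\binom{t}{s}\le n^{1/10}\le \sqrt{n}/2$.
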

\begin{proof}
Note that 
\[\binom{t}{s}\le \bigg(\frac{te}{s}\bigg)^{s}= \exp\bigg(s\bigg(\ln\bigg(\frac{te}{s}\bigg)\bigg)\le \exp\bigg(\frac{\ln(n)}{9\ln^{(k+1)}(n)}\ln(\ln^{(k)}(n))\bigg)\le \sqrt{n}/2\]
for $n$ sufficiently large as desired.
\end{proof}
We now prove the desired bound on the maximum degree of the graph $H=H_1\cup H_2 \cup H_3$.
\begin{lemma}
The maximum degree of $H$ is at most \[10^{4k}\frac{n\ln^{(k+1)}(n)}{\ln(n)}\] for all $k$ and for $n$ sufficiently large. 
\end{lemma}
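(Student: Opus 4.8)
The plan is to bound the degree contributed by each of the three subgraphs $H_1$, $H_2$, $H_3$ separately and then sum. First I would handle $H_1$, which is the easiest: since $H_1$ only includes edges in the first $q = 2^r - 1$ coordinates, every vertex has degree exactly $q \le \sqrt{n}$ in $H_1$. This term is negligible compared to the target bound of roughly $\frac{n \ln^{(k+1)} n}{\ln n}$, so it contributes a lower-order amount.

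Next I would bound the contribution of $H_3$. For a fixed vertex $x$, the edges added in $H_3$ all point in directions lying in a single group $A_{s'}$ (minus the block $B_x$). Since the $t$ blocks are split into $s/500^k$ groups of size $j = \frac{500^k t}{s}$ each, and each block has roughly $(n-q)/t$ coordinates, the number of outgoing directions is at most $j \cdot \frac{n}{t} = \frac{500^k t}{s} \cdot \frac{n}{t} = \frac{500^k n}{s}$. Plugging in $s = \frac{\ln n}{10 \ln^{(k+1)} n}$ gives a bound of roughly $500^k \cdot 10 \cdot \frac{n \ln^{(k+1)} n}{\ln n}$, which is within the target once the constant $10^{4k}$ is chosen generously. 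I would also note that the symmetry verification already done in the excerpt means I only need to count outgoing edges, not separately worry about incoming ones doubling the count (though one should keep a factor of $2$ in reserve for safety).

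The main obstacle, and the step requiring the most care, is bounding the degree contributed by $H_2$. Here each vertex $x$ inherits a $2(k-1)$-additive spanner construction on its block set $B_x$, which has size $|B_x| = s \cdot \frac{n-q}{t} \approx \frac{s n}{t}$. By the induction hypothesis, the maximum degree of that inner spanner is at most $10^{4(k-1)} \frac{|B_x| \ln^{(k)} |B_x|}{\ln |B_x|}$. The delicate point is that the recursive bound is stated in terms of the iterated logarithm of $|B_x|$, not of $n$, so I must track how $\ln^{(k)} |B_x|$ and $\ln |B_x|$ relate to their counterparts in $n$. Since $|B_x| \approx \frac{s n}{t}$ and both $s, t$ are only polylogarithmic in $n$, we have $\ln |B_x| = (1 + o(1)) \ln n$, and similarly the $k$-fold iterated logarithm of $|B_x|$ is comparable to that of $n$; the plan is to substitute the explicit values of $s$ and $t$ into $\frac{s}{t} \cdot 10^{4(k-1)} \frac{n \ln^{(k)} |B_x|}{\ln |B_x|}$ and verify that the ratio $s/t$ together with the induction constant produces exactly the factor needed to go from $10^{4(k-1)} \frac{n \ln^{(k)} n}{\ln n}$ up to at most $10^{4k} \frac{n \ln^{(k+1)} n}{\ln n}$.

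Finally I would combine the three estimates. The $H_1$ term is $O(\sqrt{n})$, the $H_3$ term is $O\!\left(500^k \frac{n \ln^{(k+1)} n}{\ln n}\right)$, and the $H_2$ term is the dominant one bounded via the induction; summing them and absorbing all constants into $10^{4k}$ (which grows fast enough in $k$ to dominate the $500^k$ and the accumulated multiplicative losses) yields the claimed maximum degree $10^{4k} \frac{n \ln^{(k+1)} n}{\ln n}$ for $n$ sufficiently large. I expect the bookkeeping of iterated logarithms in the $H_2$ bound to be where essentially all the difficulty lies, while $H_1$ and $H_3$ are routine.
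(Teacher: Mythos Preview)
Your proposal is correct and follows essentially the same approach as the paper: bound the contributions of $H_1$, $H_2$, and $H_3$ separately (as $\sqrt{n}$, the inductive bound on the inner $2(k-1)$-spanner on $\approx ns/t$ coordinates, and $500^k n/s$ respectively), then sum and absorb constants into $10^{4k}$. The paper handles the iterated-logarithm bookkeeping in $H_2$ exactly as you anticipate, using $s/t = 90\,\ln^{(k+1)}n/\ln^{(k)}n$ to pass from level $k-1$ to level $k$.
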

\begin{proof}
We proceed by induction on $k$. Note that for $k=0$ this statement is trivial. For larger $k$ note that the maximum degree in $H_1$ is at most $\sqrt{n}$, the maximum degree in $H_2$ is upper bounded, using the induction hypothesis for the $2(k-1)$-spanner on $(n-q)\cdot\frac{s}{t}$ coordinates, by $$  \frac{10^{4(k-1)}\left(n\cdot \frac{s}{t}\right)\ln^{(k)}\left(n\cdot\frac{s}{t}\right)}{\ln\left(n\cdot\frac{s}{t}\right)}.$$ The maximum degree in $H_3$ is at most $ \frac{500^kn}{s}$ for $n$ sufficiently large. Now note that $\frac{s}{t}=90\frac{\ln^{(k+1)}n}{\ln^{(k)}n}$ and so
\[\sqrt{n}+ \frac{10^{4(k-1)}\left(n\cdot \frac{s}{t}\right)\ln^{(k)}\left(n\cdot\frac{s}{t}\right)}{\ln\left(n\cdot\frac{s}{t}\right)}+ \frac{500^kn}{s}\le 10^{4k}\frac{n\ln^{(k+1)}(n)}{\ln(n)}\]
for $n$ sufficiently large as desired.
\end{proof}
%Note also that for $n$ sufficiently large we have $j\cdot 32^{k+1}\le s$. 
We first prove that it suffices to consider pairs of vertices whose coordinates match along $B_0$ due the presence of the subgraph $H_1$.
\begin{lemma}
If for all vertices $x,y$ with $x$ and $y$ matching along coordinates in $B_0$ we have
\[d_H(x,y)\le d_{Q_n}(x,y)+2k\]
then the same follows for all pairs of vertices $x$ and $y$.
\end{lemma}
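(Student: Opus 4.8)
The plan is to reduce an arbitrary pair of vertices to a pair agreeing on $B_0$ by first routing within the copies of $Q_q$ supplied by $H_1$, and to show this initial correction is ``free'' in the sense that it does not increase the additive error beyond the budget already accounted for. Concretely, given arbitrary $x,y$, let $d_0$ be the number of coordinates in $B_0$ on which they differ and let $d_1$ be the number of coordinates outside $B_0$ on which they differ, so that $d_{Q_n}(x,y)=d_0+d_1$. Since $H_1$ contains a full copy of $Q_q$ on the $B_0$-coordinates within each fixed setting of the remaining $n-q$ coordinates, I can first flip $x$ along its $B_0$-differing coordinates one at a time, staying inside a single copy of $Q_q$, to reach a vertex $x'$ that agrees with $y$ on all of $B_0$ while still agreeing with $x$ outside $B_0$. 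This path has length exactly $d_0$ and uses only $H_1$ edges.

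Next I would apply the hypothesis of the lemma to the pair $x',y$, which by construction agree along $B_0$. This yields a path in $H$ from $x'$ to $y$ of length at most $d_{Q_n}(x',y)+2k$. The key computation is that $d_{Q_n}(x',y)=d_1$, since $x'$ and $y$ now agree on $B_0$ and differ outside $B_0$ on exactly the same coordinates as $x$ and $y$ did. Concatenating the $H_1$-path from $x$ to $x'$ with the guaranteed $H$-path from $x'$ to $y$, I obtain a walk in $H$ of length at most $d_0+d_1+2k=d_{Q_n}(x,y)+2k$, which is precisely the bound required for the full pair $x,y$.

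The only genuinely delicate point — and the step I expect to require the most care — is confirming that the reduction is consistent with the distance invariant being maintained throughout the induction, namely the claim that the connecting path visits vertices with at least $\frac{\ell}{32^{k+1}}$ distinct coordinate sums. Prepending the $d_0$ steps of $H_1$ monotonically changes the $B_0$-portion of the coordinate sum, so these steps already contribute many distinct sums; I would verify that the invariant coordinate-sum count claimed for the $x',y$ path is not disturbed, since $d_{Q_n}(x',y)=d_1\ge \ell - d_0$ and the appended portion alone already supplies the required number of distinct sums. The remaining verifications — that the $H_1$-path genuinely lies in $H$ and that lengths add correctly under concatenation — are routine, so the argument reduces to the two bookkeeping identities $d_0+d_1=d_{Q_n}(x,y)$ and $d_{Q_n}(x',y)=d_1$.
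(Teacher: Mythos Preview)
Your argument is correct and essentially identical to the paper's: define $x'$ to agree with $y$ on $B_0$ and with $x$ elsewhere, use $H_1$ to walk from $x$ to $x'$ at cost $d_0=d_{Q_n}(x,x')$, then apply the hypothesis to the pair $x',y$ and concatenate. Your extra remarks on the coordinate-sum invariant go beyond what the lemma itself asserts (the paper treats that separately, splitting into the cases $d_0>\ell/2$ versus $d_0\le\ell/2$), but they point in the right direction.
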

\begin{proof}
Consider an arbitrary pair of vertices $x$ and $y$. Let $x'$ such that the first $|B_0|$ coordinates of $x'$ match $y$ and the rest match $x$. Note that  $d_{H}(x,x') = d_{Q_n}(x,x')$ and that $x'$ and $y$ satisfy the condition of the hypothesis. Therefore
\[d_H(x,y) \le d_H(x,x')+d_H(x',y)\le d_{Q_n}(x,x')+d_{Q_n}(x',y) +2k = d_{Q_n}(x,y) +2k\]
and the result follows.
\end{proof}
We now finally prove that $H$ is a $2k$-additive spanner. Recall that we are maintaining the invariant, that for any pair of points which are distance $\ell$ apart there is a path whose length is at most $\ell + 2k$  and whose points have at least $\frac{\ell}{32^{k+1}}$ different coordinate sums.

Furthermore note that the previous lemma does not interfere with this invariant; if the initial points differed in more than $\frac{\ell}{2}$ coordinates in $B_0$ then by applying the above mentioned procedure of first flipping $0$ to $1$ and then $1$ to $0$ we get a path with whose vertices give at least $\frac{\ell}{2}/2=\frac{\ell}{4}$ different coordinate sums which is sufficient. Otherwise, the points differ by at least $\ell/2$ coordinates outside of $B_0$. Therefore it is enough to maintain the invariant that among points with coordinates that match in $|B_0|$ and are of distance $\ell$, there is path of whose vertices give $\frac{\ell}{16\cdot 32^{k}}$ coordinate sums. We now consider two cases.

\textbf{Case 1:} Suppose that $x$ and $y$ differ on at most $s$ coordinates. Then there exist $\{i_1,\ldots,i_s\}$ such that the set of differences is contained inside $B_{i_1},\ldots,B_{i_s}$. Let $D:=f^{-1}(\{i_1,\ldots,i_s\})$. First we go from $x$ and $y$ to  the closest points, call these points $x'$ and $y'$ respectively, in the perfect code $D$ using  edges in $H_1$. Note that since $x$ and $y$ agree in the coordinates in $B_0$, so do $x'$ and $y'$. Now walk from $x'$ to $y'$ using edges of the $2(k-1)$-spanner used to construct $H_2$ and  adjust the necessary bits in $B_{i_1},\ldots,B_{i_s}$. This can be done because $x'$ and $y'$ only differ in coordinates in $B_{i_1},\ldots,B_{i_s}$ and by the definition of $2(k-1)$ spanner the length of this path is by at most $2(k-1)$ larger than their distance in $Q_n$. Therefore, the length of the path that we construct from $x$ to $y$ is at most $d_{Q_n}(x,y)+2(k-1)+2=d_{Q_n}(x,y)+2k$. Moreover we can maintain the invariant regarding coordinate sums by invoking the inductive hypothesis and noticing that $d_{Q_n}(x,y) = d_{Q_n}(x',y')$.

\textbf{Case 2:} Suppose that $x$ and $y$ differ on more than $s$, say $\ell$, coordinates. Then find sets $B_{i_1},\ldots,B_{i_s}$ such that $x$ and $y$ differ by at least $s$ coordinates in $B_{i_1}\cup\ldots\cup B_{i_s}$. We consider two separate situations.
\begin{itemize}
    \item Suppose that $B_{i_1},\ldots,B_{i_s}$ contain more than $\frac{\ell}{5}$ of the coordinate differences. In this case again we  first move from $x$ and $y$  to  the closest points $x'$ and $y'$ (respectively), in the perfect code $D=f^{-1}(i_1,\ldots,i_s)$ using edges in $H_1$.
     Then we use the edges in $H_2$ and the path given by the inductive hypothesis to change coordinates  of $x'$ so that it agrees with $y'$ on $B_{i_1}\cup\ldots \cup B_{i_s}$. 
     
     However when we visit the vertices which give us the  first $\frac{\ell}{10\cdot 32^{k}}$ coordinate sums out of the $\frac{\ell}{5\cdot 32^{k}}$ which is guaranteed by the inductive hypothesis, we keep them to satisfy the inductive hypothesis regarding coordinate sums, and we use the vertices with the remaining $\frac{\ell}{10\cdot 32^{k}}$ coordinate sums to 
   access all the blocks outside of $B_{i_1}\cup\ldots \cup B_{i_s}$ and use the subgraph $H_3$ to fix the differences between $x'$ and $y'$. More precisely, every time we take a step using $H_2$ in a direction in $B_{i_1}\cup\ldots\cup B_{i_s}$ to reach the point whose sum of the coordinates gives us a new residue modulo $s/500^k$ we check if there are any coordinates we can fix using $H_3$, and fix them. As $\frac{\ell}{10\cdot 32^{k}}\ge \frac{s}{500^{k}}$ we will see all the residues modulo $s/500^k$ and so eventually we will be able to fix all the blocks using $H_3$. Note here that we only lost distance 2 in moving to $x'$ and $y'$ and distance $2(k-1)$ in making $x'$ equal to $y'$ on $B_{i_1}\cup\ldots \cup B_{i_s}$. We did not lose any distance anywhere else.

    \item Finally suppose that $B_{i_1},\ldots,B_{i_s}$ contains less than $\frac{\ell}{5}$ of the differences. In this case we have that $x$ and $y$ differ on at least $4\ell/5$ coordinates outside of $B_{i_1}\cup\ldots\cup B_{i_s}$.
   Again, we first move into the points $x'$ and $y'$ in the perfect code $D=f^{-1}(i_1,\ldots,i_s)$ and start fixing the coordinates in $B_{i_1}\cup\ldots\cup B_{i_s}$ using $H_2$ and the path given by the inductive hypothesis. Since $x$ and $y$ differ on at least $s$ coordinates in $B_{i_1}\cup\ldots\cup B_{i_s}$, the vertices of this path have at least $s/32^k$ different coordinate sums. We fix the coordinates outside of $B_{i_1}\cup\ldots\cup B_{i_s}$ in a similar way as in the previous case: after every step we take in $H_2$ we try to fix as many coordinates as possible using edges in $H_3$. However the one difference in this case is, that during the stretch of $\frac{s}{32^{k}}\ge 2\cdot \frac{s}{500^k}$ different coordinate sums we encounter while walking in $H_2$, we use the first half of them to fix all those coordinates where $x$ is zero and $y$ is one, and we use the second half to map all $1$ to $0$. At least one of these halves has length greater than $\frac{2\ell}{5}$, let us assume without loss of generality that it is the half where we change the zeros in $x$ to ones. During these at least $\frac{2\ell}{5}$ steps, the sum of coordinates increases by $+1$ each time. However, we have no control over the steps that we take in $H_2$, they can both increase and decrease the coordinate sum by 1. We have taken at most $\frac{\ell}{5} + 2(k-1)$ steps in $H_2$, so this part of the path has to give at least $\frac{2\ell}{5} - \left(\frac{\ell}{5} + 2(k-1)\right)\geq \frac{\ell}{10}$ different coordinate sums. This allows us to maintain  the desired invariant and walk between  $x$ and $y$ in the required length. 
\end{itemize}
\end{proof}

\section{Concluding remarks and open questions}\label{sec:concl}
The effect of the removal of a set of vertices or edges from computer networks, corresponding to broken connections, processors or inaccessible agents, are of major interest in the study of vulnerability of networks. Parameters that measure changes given by such breakdowns lead to many interesting open problems~\cite{leverage,intsurvey}, from which we only mention a few here.

The integrity $I(Q_n)$ of the hypercube is defined as
$$I(Q_n)=\min\{|S|+m(Q_n\setminus S):S\subset V(Q_n)\},$$
where $m(H)$ denotes the number of vertices in the largest connected component of $H$. It is known (see~\cite{intba,intkl}) that
$$c\frac{2^n}{\sqrt{n}}\leq I(Q_n)\leq C\frac{2^n}{\sqrt{n}}\sqrt{\log n},$$and determining the precise asymptotics would be of interest.

The second problem we have already hinted at in the introduction. 
\begin{question}[Erd\H{o}s--Hamburger--Pippert--Weakley~\cite{Erdos}]\label{ques:fewedges}
What is the least possible number of edges in a graph $G\subset Q_n$ that has diameter $n$?
\end{question}
They observed that there is such a graph $G\subset Q_n$ with $2^n+\binom{n}{\lfloor n/2\rfloor} - 2$ edges: between each pair of consecutive layers $\binom{[n]}{k}$ and $\binom{[n]}{k+1}$ of the hypercube, fix a matching covering the larger layer. We have the following lower bound on~\cref{ques:fewedges}:
\begin{proposition}
If $G\subset Q_n$ has diameter $n$ then $e(G)\geq 2^n + \Theta\left(\frac{2^n}{n}\right)$.
\end{proposition}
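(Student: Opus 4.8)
The plan is to combine two observations: a clean bound $e(G)\ge 2^n$ coming from the fact that $G$ has no leaves, together with a structural argument showing that $G$ cannot consist mostly of long degree-two paths, which forces $\Omega(2^n/n)$ extra edges (equivalently, cycle rank $\Omega(2^n/n)$).

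First I would record that every vertex has eccentricity exactly $n$ in $G$. Since $G$ is a spanning subgraph of $Q_n$ we have $\dist_G(x,y)\ge \dist_{Q_n}(x,y)$ for all $x,y$, so the eccentricity of any vertex $v$ is at least $\dist_{Q_n}(v,\bar v)=n$; as $\operatorname{diam}(G)=n$ it is at most $n$, hence exactly $n$. I would then rule out leaves: if $v$ had a unique neighbour $u$, then every path from $v$ passes through $u$, so $\dist_G(u,w)=\dist_G(v,w)-1\le n-1$ for all $w$, forcing $\operatorname{ecc}_G(u)\le n-1$ and contradicting the previous observation. Hence $G$ has minimum degree at least $2$ and $e(G)=\tfrac12\sum_v\deg_G(v)\ge 2^n$.

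The key structural step is to bound the length of maximal paths of degree-two vertices. Suppose $P=u_0u_1\cdots u_L$ is such a path, with $u_1,\dots,u_{L-1}$ of degree two (the endpoints being branch vertices, which exist since there are no leaves and $G$ is not a single cycle, a cycle on $2^n$ vertices having diameter far exceeding $n$). Taking the midpoint $u_i$ with $i=\lfloor L/2\rfloor$, any walk leaving $u_i$ must travel along $P$ until it first reaches an endpoint, so $\dist_G(u_i,w)\ge\lfloor L/2\rfloor$ for every $w$ off $P$. If $L>2n$ this exceeds $n$, so by the eccentricity bound no vertex lies off $P$; but then $G$ is itself a path or cycle and has diameter far larger than $n$, a contradiction. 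Thus every maximal degree-two path has length $O(n)$.

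Finally I would pass to the multigraph $G'$ obtained by suppressing all degree-two vertices. Writing $b$ for the number of branch vertices and $m$ for the number of maximal degree-two paths, every edge of $G$ lies on exactly one such path and each has length $O(n)$, so $2^n\le e(G)\le O(n)\cdot m$, giving $m\ge\Omega(2^n/n)$. Since suppression preserves cycle rank and each branch vertex keeps degree at least $3$, we have $2m=\sum_{v}\deg_{G'}(v)\ge 3b$, whence the cycle rank $r=m-b+1\ge m/3\ge\Omega(2^n/n)$. As $e(G)=2^n-1+r$, this yields $e(G)\ge 2^n+\Omega(2^n/n)$, as required. I expect the main obstacle to be the chain-length bound of the third paragraph: one must argue carefully that a degree-two vertex deep inside a long path cannot reach the rest of the graph quickly (ruling out a shortcut that exits through the far endpoint), and separately dispose of the degenerate case in which $G$ is a single cycle.
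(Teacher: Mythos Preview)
Your proposal is correct and takes a genuinely different route from the paper's proof. Both arguments begin by showing that $G$ has minimum degree at least $2$ (your eccentricity argument and the paper's antipode argument are essentially equivalent), but they diverge after that. The paper fixes a vertex $v$, grows a BFS tree $\mathcal{T}$, and uses $\operatorname{diam}(G)=n$ to bound the depth of $\mathcal{T}$ by $n$; pigeonhole then gives a layer of size at least $(2^n-1)/n$, hence at least that many leaves in $\mathcal{T}$, and since every leaf of $\mathcal{T}$ has a second incident edge in $G\setminus\mathcal{T}$ this yields the surplus of $\Omega(2^n/n)$ edges directly. Your argument instead bounds the length of every maximal degree-two chain by $O(n)$ via the midpoint-eccentricity observation, deduces that the suppressed multigraph $G'$ has $m=\Omega(2^n/n)$ edges, and converts this into cycle rank $r\ge m/3$ using $2m\ge 3b$. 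The BFS-tree proof is shorter and more direct; your suppression argument is a bit longer but yields slightly more structural information (e.g.\ that $G$ contains $\Omega(2^n/n)$ edge-disjoint cycles, and that no long ``thread'' of degree-two vertices can occur). The minor off-by-one worry you flag in the chain-length bound is harmless: reaching a vertex off $P$ costs at least $\lfloor L/2\rfloor+1$ steps, so $L\le 2n-1$ suffices, and the single-cycle degenerate case is dispatched exactly as you indicate.
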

\begin{proof}
Let $G\subset Q_n$ be a subgraph with diameter $n$. First we show that $G$ has minimum degree at least 2. Indeed assume $v$ is a leaf, connected only to vertex $u$. Let $u'$ denote the antipodal vertex to $u$. Then the distance of $v$ and $u'$ in $G$ is at least $n+1$, contradicting the fact that $G$ has diameter $n$.

Next, fix an arbitrary vertex $v$ and consider a BFS tree $\mathcal{T}$ rooted at $v$. Since $G$ has diameter $n$, the tree $\mathcal{T}$ has at most $n+1$ layers (the first layer being the single vertex $v$) and hence has a layer $L$ of size at least $\frac{2^{n}-1}{n}$. Note that below each vertex of $L$ there is a distinct leaf, so $\mathcal{T}$ has at least $\frac{2^{n}-1}{n}$ leaves. Since in $G$ every vertex has degree at least two, there must exist at least $\left\lceil\frac{2^{n}-1}{2n}\right\rceil$ edges that are in $G$ but not in $\mathcal{T}$. Hence $$e(G)\geq e(\mathcal{T})+\frac{2^{n}-1}{2n} = 2^n + \frac{2^{n}-1}{2n} - 1.$$
\end{proof}

A question in the same spirit as~\cref{ques:fewedges} concerns 2-additive spanners. Denote by $f_2(n)$ the fewest possible edges in a graph $G\subset Q_n$ that is a 2-additive spanner, that is, $d_G(x,y)\leq d_{Q_n}(x,y)+2$ for all $x,y$. The best known bounds, given in~\cite{balogh20082,Erdos} are
$$c2^n\log n \leq f_2(n)\leq C2^n\sqrt{n}.$$

\bibliographystyle{plain}
\bibliography{hypercube}

\end{document}